\newtheorem{thm}{Theorem}[section]
\newtheorem{prop}[thm]{Proposition}
\newcommand{\qed}{\hfill $\fbox{}$}
\newenvironment{proof}{\vskip 0.2in\hspace{-.22 in}\textbf{Proof:}}{\qed}
\title{Quotients of Primes in an Algebraic Number Ring}
\author{Brian D. Sittinger}
\begin{document}
\maketitle

\begin{abstract}
It has been established on many occasions that the set of quotients of prime numbers is dense in the set of positive real numbers. More recently, it has been proved that the set of quotients of primes in the Gaussian integers is dense in the complex plane. In this article, we not only extend this result to any imaginary quadratic number ring, but also prove that the set of quotients of primes in any real quadratic number ring is dense in the set of real numbers. To conclude, we show how to extend these results to an arbitrary algebraic number ring.
\end{abstract}

\normalsize

\section{Introduction}  \label{sect1}

It is a standard fact from Real Analysis that $\mathbb{Q}$ is a dense subset of $\mathbb{R}$. However, it is not as well-known that the set of quotients of \emph{prime} numbers is a dense subset of $\mathbb{R}_{> 0}$. One of the earliest appearances of this fact is in Sierpi\'{n}ski's textbook on Number Theory \cite{Sierpinski}. This fact, along with a few variations of it, has been studied and explored in great detail through the years; see \cite{Brown}, \cite{Garcia1}, \cite{Hobby}, \cite{Micholson}, and \cite{Starni} for instance. These results are proved by using some variant of the Prime Number Theorem, which as a reminder we now state in its classic formulation below.

\begin{thm}
If $\pi(x)$ denotes the number of prime numbers in $\mathbb{N}$ less than or equal to some $x > 0$, then
$\displaystyle \pi(x) \sim \frac{x}{\ln{x}}$. In other words, $\displaystyle\lim_{x \to \infty} \frac{\pi(x)}{\frac{x}{\ln{x}}} = 1.$
\end{thm}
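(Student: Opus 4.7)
The plan is to prove the Prime Number Theorem via Newman's short analytic argument (the proof popularized by Zagier). First I would introduce the Chebyshev function $\vartheta(x) = \sum_{p \le x} \ln p$, noting that by Abel summation $\pi(x) \sim x/\ln x$ is equivalent to $\vartheta(x) \sim x$, so the task reduces to proving the latter asymptotic.

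The next step is to establish Chebyshev's bound $\vartheta(x) = O(x)$ by examining the prime divisors of $\binom{2n}{n}$, and to introduce the Riemann zeta function $\zeta(s) = \sum_{n \ge 1} n^{-s}$ for $\mathrm{Re}(s) > 1$, its Euler product $\zeta(s) = \prod_p (1 - p^{-s})^{-1}$, and its analytic continuation to $\mathrm{Re}(s) > 0$ with a simple pole of residue $1$ at $s = 1$. The key technical input is the non-vanishing of $\zeta$ on the line $\mathrm{Re}(s) = 1$, obtained by taking logarithmic derivatives of the Euler product and exploiting the inequality $3 + 4\cos\theta + \cos 2\theta \ge 0$ applied to $\zeta(\sigma)^3 \, |\zeta(\sigma + it)|^4 \, |\zeta(\sigma + 2it)|$ as $\sigma \to 1^+$.

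With non-vanishing in hand, I would consider $\Phi(s) = \sum_p \frac{\ln p}{p^s}$ and show that $\Phi(s) - \frac{1}{s - 1}$ extends holomorphically to an open neighborhood of $\{\mathrm{Re}(s) \ge 1\}$. Applying Newman's analytic theorem --- which asserts that if a bounded function $f : [0, \infty) \to \mathbb{R}$ has a Laplace transform that extends holomorphically across $\mathrm{Re}(s) = 0$, then $\int_0^\infty f(t) \, dt$ converges --- to $f(t) = \vartheta(e^t) e^{-t} - 1$ yields the convergence of $\int_1^\infty \frac{\vartheta(x) - x}{x^2} \, dx$. A short monotonicity argument, using that $\vartheta$ is non-decreasing, then converts this convergence into $\vartheta(x) \sim x$, completing the proof.

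The main obstacle is the non-vanishing of $\zeta(s)$ on $\mathrm{Re}(s) = 1$, which is the heart of every proof of the PNT and was historically the chief difficulty overcome by Hadamard and de la Vall\'{e}e Poussin in 1896; the analytic Tauberian step, while clever, is comparatively short once the contour-integration trick is set up. Since the present paper only invokes the PNT as a black box to study prime quotients in quadratic number rings, I would not reproduce the details but simply cite a standard reference such as Zagier's exposition.
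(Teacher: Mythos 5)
The paper does not prove this statement at all: Theorem 1.1 is the classical Prime Number Theorem, quoted as background (``which as a reminder we now state in its classic formulation below'') and used purely as a black box, so there is no in-paper argument to compare yours against. Your outline is the standard Newman--Zagier proof and is correct as a sketch: the reduction of $\pi(x) \sim x/\ln x$ to $\vartheta(x) \sim x$ via partial summation, the Chebyshev bound $\vartheta(x) = O(x)$ from $\binom{2n}{n}$ (needed to make $f(t) = \vartheta(e^t)e^{-t} - 1$ bounded so that Newman's Tauberian theorem applies), the non-vanishing of $\zeta$ on $\mathrm{Re}(s) = 1$ via the $3 + 4\cos\theta + \cos 2\theta \ge 0$ trick, the holomorphic extension of $\Phi(s) - \frac{1}{s-1}$, and the final monotonicity argument converting convergence of $\int_1^\infty \frac{\vartheta(x)-x}{x^2}\,dx$ into the asymptotic are all the right steps in the right order. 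Your closing judgment --- that in the context of this paper one should simply cite a standard reference rather than reproduce the proof --- is exactly what the author does.
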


Recently, Garcia (\cite{Garcia}) established that the set of quotients of primes from the ring of \emph{Gaussian integers} $\mathbb{Z}[i]$ is dense in $\mathbb{C}$ by using a suitable generalization of the Prime Number Theorem to $\mathbb{Z}[i]$. We generalize his work by showing that the set of quotients of primes from any fixed imaginary quadratic ring is dense in $\mathbb{C}$. This method of proof also permits use to show with little effort that the set of quotients of primes belonging to a fixed congruence class of Gaussian integers is dense in $\mathbb{C}$.

After accomplishing this task, we turn our attention to real quadratic number rings, where we show that the set of quotients of primes from a given real quadratic ring is dense in $\mathbb{R}$. Finally, we show how to establish analogous density results for the set of quotients of primes from any algebraic number ring.

In this article, we refer to the prime numbers in $\mathbb{N}$ as \emph{rational primes}, in order to distinguish them from the prime elements in a given algebraic number ring.

\section{Background on Algebraic Number Rings}

We first fix some notation for an algebraic number ring. For further details, check any textbook on Algebraic Number Theory, such as \cite{Marcus} and \cite{Stewart}.

Let $K$ denote an algebraic number field over $\mathbb{Q}$ (that is, $[K : \mathbb{Q}] < \infty$) with $\mathcal{O}$ being its corresponding number ring. In particular, a quadratic number field has the form $\mathbb{Q}(\sqrt{d})$ for any square-free integer $d$. Its ring of integers $\mathcal{O}$, called a \emph{quadratic number ring}, is the set $\{a + b\omega : a, b \in \mathbb{Z}\},$ where
$$\omega = \begin{cases}
\sqrt{d} & \text{if} \; d \not\equiv 1 \; \text{mod 4}\\
\frac{1+\sqrt{d}}{2} & \text{if} \; d \equiv 1 \; \text{mod 4}.\\
\end{cases}$$

Although we do not necessarily have unique factorization into irreducible elements in $\mathcal{O}$, we do have unique factorization into prime \emph{ideals} in $\mathcal{O}$. One way to `measure' how far $\mathcal{O}$ is from being a UFD is as follows: we set up an equivalence relation on the ideals in $\mathcal{O}$ as follows: we say that two ideals $\mathfrak{a}$ and $\mathfrak{b}$ in $\mathcal{O}$ are equivalent if there exist nonzero $\alpha, \beta \in \mathcal{O}$ such that $\langle \alpha \rangle \mathfrak{a} = \langle \beta \rangle \mathfrak{b}$. This equivalence partitions the ideals into disjoint ideal classes that form a finite abelian group called the \emph{class group} of $\mathcal{O}$. Its cardinality is called the \emph{class number} of $\mathcal{O}$ and is denoted by $h$. For what we need, it should be noted that the trivial ideal class is the class of \emph{principal} ideals, and thus $\mathcal{O}$ is a UFD iff $h=1$ (as all ideals in a UFD are principal). The trivial ideal class is of chief interest to us, because a generator of a principal prime ideal gives a prime element in $\mathcal{O}$.

\section{Quotients of Primes in an Imaginary Quadratic Ring}

In this section, we extend the density result for the Gaussian integers \cite{Garcia} to any imaginary quadratic number ring $\mathcal{O}$. To do this, we use the following `angular' prime number theorem of Kubilius \cite{Kubilius2} and Hecke \cite{Hecke}. It states that any sector $\{z \in \mathbb{C} \, : \, \theta_1 < \arg{z} < \theta_2\}$ in the complex plane contains prime elements of arbitrarily large magnitude.

\begin{prop}
Let $\mathcal{O}$ be an imaginary quadratic number ring. Fix $0 \leq \theta_1 < \theta_2 \leq 2\pi$ and $x > 0$, and let $\Pi(x; \theta_1, \theta_2)$ denote the number of prime elements $\rho \in \mathcal{O}$ satisfying $|\rho|^2 \leq x$ and $\theta_1 < \arg{\rho} < \theta_2$. Then,

$$\Pi(x; \theta_1, \theta_2) \sim \frac{g (\theta_2 - \theta_1)}{2\pi h} \cdot \frac{x}{\ln{x}},$$
where $g$ denotes the number of units in $\mathcal{O}$.
\end{prop}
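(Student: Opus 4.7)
The plan is to follow Hecke's classical framework of L-functions attached to ideal class characters together with angular Gr\"ossencharakters. The factor $g/(2\pi)$ arises from the unit group acting on generators of principal prime ideals, the factor $1/h$ from isolating the principal ideal class via orthogonality on the class group, and the factor $\theta_2 - \theta_1$ from Fourier analysis on the unit circle.

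First I would reduce from prime elements to principal prime ideals. Each prime element $\rho \in \mathcal{O}$ generates a principal prime ideal $(\rho)$, and conversely each principal prime ideal has exactly $g$ generators, namely its associates under multiplication by the $g$ units. Since in an imaginary quadratic field the units are roots of unity, these $g$ associates are rotations of $\rho$ by integer multiples of $2\pi/g$. Thus, assuming angular equidistribution of the underlying principal prime ideals, the number of prime elements in a sector of width $\theta_2 - \theta_1$ equals $g(\theta_2-\theta_1)/(2\pi)$ times the total count of principal prime ideals of bounded norm.

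Second, to isolate the principal ideal class among all prime ideals, I would use orthogonality: the indicator of the principal class is $\frac{1}{h}\sum_\chi \chi(\mathfrak{p})$, where $\chi$ runs over the $h$ characters of the class group. To capture angular equidistribution I would introduce Gr\"ossencharakters $\psi_k$ defined on principal ideals by $\psi_k((\alpha)) = (\alpha/|\alpha|)^{gk}$ (the exponent $gk$ ensures well-definedness under the unit group) and extended multiplicatively. For each pair $(\chi,\psi_k)$ I would form the Hecke L-function $L(s,\chi,\psi_k) = \prod_{\mathfrak{p}}(1 - \chi(\mathfrak{p})\psi_k(\mathfrak{p})N(\mathfrak{p})^{-s})^{-1}$ and apply the Wiener--Ikehara Tauberian theorem to $-L'/L$ to obtain an asymptotic for the corresponding weighted prime-ideal count. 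Combining these asymptotics via summation over $\chi$ and Fourier inversion in $k$ yields the stated formula.

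The principal obstacle is establishing non-vanishing of $L(s,\chi,\psi_k)$ on the line $\mathrm{Re}(s) = 1$ whenever $(\chi,\psi_k)$ is nontrivial; this is the natural analogue of Dirichlet's nonvanishing theorem. Hecke's analytic continuation supplies the holomorphy away from $s=1$, and non-vanishing follows from an adaptation of the de la Vall\'ee Poussin ``$3+4\cos\theta+\cos 2\theta \geq 0$'' argument, though care is required because the Gr\"ossencharakters are ramified at the archimedean place, so the gamma factors in the functional equation depend nontrivially on $k$.
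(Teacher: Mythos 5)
The paper does not actually prove this proposition: it is quoted as a known theorem, with the proof deferred to Kubilius \cite{Kubilius2} and Hecke \cite{Hecke}. Your sketch is an outline of precisely the argument those sources carry out --- Hecke $L$-functions built from class-group characters twisted by angular Gr\"ossencharakters, a Tauberian theorem applied to $-L'/L$, and non-vanishing on $\mathrm{Re}(s)=1$ as the decisive analytic input --- and your bookkeeping of the constants is right: $1/h$ from orthogonality over the class group, $g$ from the $g$ generators of each principal prime ideal, and $(\theta_2-\theta_1)/(2\pi)$ from equidistribution of the argument. So the strategy is sound and matches the literature the paper leans on.

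Two places in your sketch need more care before it is a proof. First, the Euler product $\prod_{\mathfrak p}(1-\chi(\mathfrak p)\psi_k(\mathfrak p)N(\mathfrak p)^{-s})^{-1}$ presupposes that $\psi_k(\mathfrak p)$ is defined for \emph{non-principal} $\mathfrak p$, but you only defined $\psi_k$ on principal ideals. You must first extend $\psi_k$ to a character of the full ideal group (possible since the quotient by principal ideals is the finite class group); the $h$ such extensions are exactly the products $\chi\cdot\widetilde{\psi_k}$, and this is what legitimizes treating the pairs $(\chi,\psi_k)$ as honest Hecke characters. Second, ``Fourier inversion in $k$'' is not quite enough to pass from the asymptotics of the weighted sums $\sum_{N(\rho)\le x}\psi_k((\rho))$ to the count of primes with $\arg\rho$ in a prescribed arc: the indicator function of an arc is not a trigonometric polynomial, so you need a Weyl-type equidistribution argument (or an Erd\H{o}s--Tur\'an/Fej\'er approximation of the indicator from above and below by finite trigonometric polynomials) to conclude. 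Both points are standard, and your identification of the non-vanishing of $L(s,\chi,\psi_k)$ on the $1$-line as the principal obstacle is correct; with those gaps filled, this is the proof of Hecke and Kubilius that the paper cites.
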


The presence of $h$ in this prime number formula signifies that we are looking at the principal ideal class. Moreover, the significance of $g$ follows from the fact that a generator of a principal ideal is unique up to multiplication by a unit in $\mathcal{O}$. It should also be noted that $g$ is always finite in an imaginary quadratic number ring.

A consequence of this proposition we will use is that if $0 < a < b$, then for all sufficiently large real $x$, an annular sector $$\{z \in \mathbb{C} : ax < |z|^2 < bx \;\text{and}\; \theta_1 < \arg{z} < \theta_2\},$$
\noindent contains a prime element in $\mathcal{O}$. This follows from straightforward calculus:
\begin{align*}
\lim_{x \to \infty} [\Pi(bx; \theta_1, \theta_2) - \Pi(ax; \theta_1, \theta_2)] & = \lim_{x \to \infty} \Pi(bx; \theta_1, \theta_2) \Big[1 - \frac{\Pi(ax; \theta_1, \theta_2)}{\Pi(bx; \theta_1, \theta_2)}\Big] \\
& = \lim_{x \to \infty} \Pi(bx; \theta_1, \theta_2) \Big[1 - \frac{ax \ln(bx)}{bx \ln(ax)}\Big] \\
& =  \Big(1 - \frac{a}{b}\Big) \lim_{x \to \infty} \Pi(bx; \theta_1, \theta_2)\\
&= \infty.
\end{align*}

Now, we state the desired variant of a prime number theorem to deduce the desired density result for any imaginary quadratic number ring.

\begin{thm} The set of quotients of primes in an imaginary quadratic ring $\mathcal{O}$ is dense in the complex plane.
\end{thm}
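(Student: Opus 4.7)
The plan is to repeat the proof of Theorem 2.2 essentially verbatim, with Proposition 4.1 standing in for Proposition 2.1. As before, it is enough to show that any open annular sector $\{z \in \mathbb{C} : \psi_1 < \arg z < \psi_2,\, 0 < r < |z| < R\}$ contains a quotient of two primes of $\mathcal{O}$. Since the unit group of $\mathcal{O}$ is a finite cyclic group of order $g$, multiplication by a unit rotates by a multiple of $2\pi/g$, and since units times primes are primes, we may replace $\pi_1, \pi_2$ by associates and assume without loss of generality that $\psi_1, \psi_2 \in [0, 2\pi/g]$. We also use that for imaginary $d$ the norm is $N(\rho) = |\rho|^2$, so the condition $N(\rho) \leq x$ in Proposition 4.1 really is a bound on magnitude.

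First, I would rerun the short calculus computation from Section 2, now feeding in the asymptotic from Proposition 4.1 instead of Proposition 2.1. The constant prefactor changes from $4/(2\pi)$ to $g/(2\pi h)$, but it is still strictly positive, so for any fixed $\theta \in (0, 2\pi]$ one obtains
$$\lim_{x \to \infty} \left[\Pi\left(\frac{x}{r^2}; 0, \theta\right) - \Pi\left(\frac{x}{R^2}; 0, \theta\right)\right] = \infty.$$
In particular, every annular sector of the form $\{z : ax \le N(z) \le bx,\ 0 < \arg z < \theta\}$ contains infinitely many prime elements of $\mathcal{O}$ once $x$ is large enough.

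Second, pick a prime $\pi_1 \in \mathcal{O}$ lying in the open sector $(\psi_1, \psi_2)$ whose magnitude is large enough that, with $\xi = \min\{\psi_2 - \arg(\pi_1),\, \arg(\pi_1) - \psi_1\} > 0$, the displayed limit (applied to $\theta = \xi$ with $x = |\pi_1|^2$) produces at least two prime elements of $\mathcal{O}$ in the region $|\pi_1|/R < |\rho| < |\pi_1|/r$ and $0 < \arg \rho < \xi$. Choose any such $\pi_2$. Then $r < |\pi_1/\pi_2| < R$ from the modulus bound, and $\arg(\pi_1/\pi_2) = \arg(\pi_1) - \arg(\pi_2)$ lies strictly between $\psi_1$ and $\psi_2$ by the choice of $\xi$, completing the argument.

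The main conceptual obstacle, and the reason this is more than a cosmetic rewrite of Theorem 2.2, is that $\mathcal{O}$ is in general not a UFD, so one has to be sure that there are enough genuine prime elements to work with rather than just prime ideals or irreducibles. This is precisely what Proposition 4.1 guarantees: the factor $1/h$ records the restriction to the principal ideal class (the only class whose prime ideals are generated by prime elements), the factor $g$ records the ambiguity in choosing a generator up to units, and the resulting positive constant $g/(2\pi h)$ is all that the density argument actually requires. Once infinitely many prime elements are available in every sector, the proof of Theorem 2.2 transplants without further change.
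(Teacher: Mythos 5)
Your proposal is correct and follows exactly the route the paper takes: the paper's own proof of this theorem simply performs the same reduction to a sector of angular width $2\pi/g$ via multiplication by units and then says to adapt the proof of Theorem 2.2 with Proposition 4.1 in place of Proposition 2.1, which is precisely what you carry out. Your explicit remarks that $N(\rho)=|\rho|^2$ for imaginary $d$ and that Proposition 4.1 supplies enough genuine prime \emph{elements} (not just prime ideals) are welcome elaborations of details the paper leaves implicit.
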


\begin{proof}
It suffices to show that any annular sector
$$\{z \in \mathbb{C} : \psi_1 < \arg{z} < \psi_2, \, 0 < r < |z| < R\}$$
contains a quotient of prime numbers in $\mathcal{O}$.

Fixing $\theta \in (0, 2\pi]$, since
$$\lim_{x \to \infty} \Big[\Pi\Big(\frac{x}{r^2}; 0, \theta\Big) - \Pi\Big(\frac{x}{R^2}; 0, \theta\Big)\Big] = \infty,$$

\noindent there exists $x_0 > 0$ such that
$$\Pi\Big(\frac{x}{r^2}; 0, \theta \Big) - \Pi\Big(\frac{x}{R^2}; 0, \theta\Big) \geq 2 \; \text{for all} \, x \geq x_0.$$

Moreover, since the angular prime number theorem implies that there are infinitely many prime numbers in $\mathcal{O}$ in the sector $(\psi_1, \psi_2)$, there exists a prime number $\pi_1$ in the sector $(\psi_1, \psi_2)$ with sufficiently large magnitude ($|\pi_1|^2 > x_0$) such that $$\Pi\Big(\frac{|\pi_1|^2}{r^2}; 0, \xi \Big) - \Pi\Big(\frac{|\pi_1|^2}{R^2}; 0, \xi\Big) \geq 2,$$
where $\xi = \min\{\psi_2 - \arg(\pi_1), \,\arg(\pi_1) - \psi_1\}$.

Next, the inequality in the last assertion implies that there exists a prime number $\pi_2$ satisfying $\displaystyle\frac{|\pi_1|}{R} < |\pi_2| < \frac{|\pi_1|}{r}$ and $0 < \arg(\pi_2) < \xi$.

From this, it now follows that $\displaystyle r < \Big|\frac{\pi_1}{\pi_2}\Big| < R$ and $\displaystyle \psi_1 < \arg\Big(\frac{\pi_1}{\pi_2}\Big) < \psi_2$.
\end{proof}

\section{Quotients of Gaussian Primes, Revisited}

Recall that the ring of \emph{Gaussian integers} $\mathbb{Z}[i]$ is the set of integers in the field $\mathbb{Q}(i)$ and have the form $a + bi$ where $a,b \in \mathbb{Z}$. This set is well-known to be a UFD with units $\pm 1, \pm i$. It can be shown (see \cite{Marcus} or \cite{Stewart}) that a nonzero Gaussian integer is a prime if and only if it is an associate of $p$ (namely $\pm p$ and $\pm pi$) for some rational prime $p \equiv 3$ mod 4, or it is of the form $a+bi \in \mathbb{Z}[i]$ where $a^2+b^2$ is a rational prime. We refer to these primes as \emph{Gaussian primes}.

Note that the proof of Theorem 3.2 applied to $\mathbb{Z}[i]$ does not rely on a specific type of a Gaussian prime, while the proof in \cite{Garcia} fixes the Gaussian prime in the denominator be a rational prime congruent to 3 mod 4. Although this latter idea simplifies the density proof a bit with regards to the argument of the prime, it does not allow for the denominator to be an associate of a Gaussian prime that is not a rational prime.

We exploit this distinction to show how flexible the techniques from our previous density proof is, as we can modify it to give a Gaussian integer analogue of the following fact \cite{Sittinger}:

\begin{prop}
Fix $a, b, m, n \in \mathbb{N}$ such that $\gcd(a, m) = \gcd(b, n) = 1$. Then,
$$\Big\{\frac{p}{q} : p,q \; \text{prime in}\; \mathbb{Z}, p \equiv a \;\text{mod}\; m, \text{ and } q \equiv b \;\text{mod}\; n \Big\}$$ is dense in $\mathbb{R}$.
\end{prop}

To state and prove this generalization, we note that the Gaussian integers have a notion of congruence as well.
Fix $\gamma \in \mathbb{Z}[i]$; for $\alpha, \beta \in \mathbb{Z}[i]$ we write $\alpha \equiv \beta$ mod $\gamma$ iff $\gamma \mid (\beta - \alpha)$. With this notation, we can now state the following `Dirichlet' variant of the prime number theorem \cite{Kubilius}.

\begin{prop}
Fix $0 \leq \theta_1 < \theta_2 \leq 2\pi$ and $x > 0$, and let $\Pi(x; \theta_1, \theta_2; \beta, \gamma)$ denote the number of Gaussian primes $\rho \equiv \beta$ mod $\gamma$ satisfying $\theta_1 < \arg{\rho} < \theta_2$ and $|\rho|^2 \leq  x$. Then,
$$\Pi(x; \theta_1, \theta_2; \beta, \gamma) \sim \frac{4(\theta_2 - \theta_1)}{2\pi \phi(\gamma)} \cdot \frac{x}{\ln{x}},$$
where $\phi(\gamma)$ denotes the number of invertible congruence classes modulo $\gamma$.
\end{prop}

It should be noted that this proposition asserts that the Gaussian primes are equidistributed both by argument and by congruence class. Moreover, the $\phi$ function above is an extension of the Euler phi function to $\mathbb{Z}[i]$.

By adapting the proof of Theorem 3.2 with this variation of the prime number theorem, we immediately obtain the following result.

\begin{thm}
Fix $\beta_1, \beta_2, \gamma_1, \gamma_2 \in \mathbb{Z}[i]$ such that $\gcd(\beta_1, \gamma_1)= \gcd(\beta_2, \gamma_2) = 1$. Then, the set of quotients
$$\Big\{\frac{\pi_1}{\pi_2} : \pi_1, \pi_2 \text{ are Gaussian primes}, \, \pi_1 \equiv \beta_1 \;\text{mod}\; \gamma_1,\;\text{and}\; \pi_2 \equiv \beta_2 \;\text{mod}\; \gamma_2 \Big\}$$ is dense in $\mathbb{C}$.
\end{thm}

\section{Quotients of Primes in a Real Quadratic Number Ring}

In this section, suppose that $\mathcal{O}$ is a real quadratic number ring. These rings are quite different than their imaginary counterparts. First, since a real quadratic number ring is a subset of $\mathbb{R}$, we will be stating a density statement in $\mathbb{R}$ instead of $\mathbb{C}$. Secondly, a real quadratic number ring has \emph{infinitely} many units. Fortunately, any such unit can be written in the form $\pm \eta^k$ for some integer $k$ and fixed unit $\eta \in \mathcal{O}$ called a \emph{fundamental unit} of $\mathcal{O}$. Without loss of generality, we may assume that $\eta > 1$.

There is a Prime Number Theorem for real quadratic number rings that dates back to the work of Hecke \cite{Hecke}, but we will use the following refinement of Rademacher \cite{Rademacher}. To state the theorem, we use the following definition: we say that $\alpha \in \mathcal{O}$ is \emph{totally positive} if both $\alpha$ and its conjugate $\alpha'$ are positive. (Recall that for any $a,b \in \mathbb{Q}$, the conjugate of $a + b\sqrt{d}$ equals $a - b\sqrt{d}$.)

\begin{prop}
Let $\mathcal{O}$ be a real quadratic number ring. Fix $x > 0$, and let $\Pi(x)$ denote the number of totally positive prime elements $\rho \in \mathcal{O}$ satisfying $0 < \rho \leq x$. Then, we have
$$\Pi(x) \sim \frac{1}{2h \ln{\eta}} \int_2^x \int_2^x \frac{du \, dv}{\ln(uv)}.$$
\end{prop}

It should be noted that the double integral is asymptotically equal to $\frac{x^2}{\ln(x^2)}$ \cite{Hinz}. By letting $x \to \infty$, this theorem shows that there are infinitely many totally positive prime elements in $\mathcal{O}$. Now, we use this result to readily establish the density of quotients of primes in $\mathcal{O}$.

\begin{thm}
The set of quotients of primes in a real quadratic number ring $\mathcal{O}$ is dense in $\mathbb{R}$.
\end{thm}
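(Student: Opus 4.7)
The plan is to mimic the proof of Theorem 2.2, where now the role of $\arg\rho$ is played by the real ratio $\rho'/\rho$ (positive for totally positive $\rho$) and the role of $|\rho|^2$ by the norm $N(\rho)$; Proposition 5.1 is the analytic engine. Since $-1 \in \mathcal{O}$ is a unit and preserves primality, it will suffice to find, for every open interval $(\alpha,\beta) \subset \mathbb{R}_{>0}$, totally positive primes $\pi_1, \pi_2 \in \mathcal{O}$ with $\pi_1/\pi_2 \in (\alpha,\beta)$; the negative half of $\mathbb{R}$ is then reached via $(\pi_1,\pi_2) \mapsto (-\pi_1, \pi_2)$. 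The key algebraic identity I will exploit is
$$\pi = \sqrt{N(\pi)\big/(\pi'/\pi)} \qquad \Longrightarrow \qquad \frac{\pi_1}{\pi_2} = \sqrt{\frac{N(\pi_1)}{N(\pi_2)}\cdot \frac{\pi_2'/\pi_2}{\pi_1'/\pi_1}},$$
valid for totally positive $\pi_1, \pi_2$. Its moral is clear: if $\pi_1'/\pi_1$ and $\pi_2'/\pi_2$ are confined to a common narrow window, the quotient $\pi_1/\pi_2$ is essentially $\sqrt{N(\pi_1)/N(\pi_2)}$.

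Fix $t \in (\alpha,\beta)$ and pick $\delta > 0$ so small that $\bigl(t(1-\delta)/(1+\delta),\, t(1+\delta)/(1-\delta)\bigr) \subset (\alpha,\beta)$ and $(1+\delta)/(1-\delta) < \eta^2$; set $a = 1-\delta$ and $b = 1+\delta$, so that Proposition 5.1 yields $\Pi(x;a,b) \sim Cx/\ln x$ for a positive constant $C$. The same calculus computation used in Section 2 then carries over to give
$$\lim_{x\to\infty}\bigl[\Pi(Qx;a,b) - \Pi(Px;a,b)\bigr] = \infty \qquad \text{for every } 0 < P < Q,$$
so any sufficiently long annular norm window contains infinitely many totally positive primes with $\rho'/\rho \in (a,b]$.

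With these tools in hand, I would first choose a totally positive prime $\pi_1$ with $\pi_1'/\pi_1 \in (a,b)$ and $N(\pi_1)$ so large that the preceding annular estimate produces a second totally positive prime $\pi_2$ with $\pi_2'/\pi_2 \in (a,b)$ and
$$\frac{1-\delta}{1+\delta}\cdot\frac{N(\pi_1)}{t^2} \;<\; N(\pi_2) \;<\; \frac{1+\delta}{1-\delta}\cdot\frac{N(\pi_1)}{t^2}.$$
This window has ratio $((1+\delta)/(1-\delta))^2 > 1$ and is therefore a legitimate annulus. Substituting into the identity above, $N(\pi_1)/N(\pi_2) \in \bigl(t^2(1-\delta)/(1+\delta),\, t^2(1+\delta)/(1-\delta)\bigr)$ while $(\pi_2'/\pi_2)/(\pi_1'/\pi_1) \in \bigl((1-\delta)/(1+\delta),\, (1+\delta)/(1-\delta)\bigr)$, so $\pi_1/\pi_2 \in \bigl(t(1-\delta)/(1+\delta),\, t(1+\delta)/(1-\delta)\bigr) \subset (\alpha,\beta)$ as required. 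The genuine difficulty is conceptual rather than technical: identifying $(\rho'/\rho,\, N(\rho))$ as the correct real-quadratic surrogate for $(\arg\rho,\, |\rho|^2)$ and recognizing Rademacher's Proposition 5.1 as the analogue of Kubilius's equidistribution theorem—after which the Gaussian argument transplants almost verbatim.
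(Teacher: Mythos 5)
Your proof is correct, but it takes a genuinely different route from the paper's. You transplant the two-prime Gaussian argument, treating $(\rho'/\rho,\, N(\rho))$ as the real-quadratic surrogate for $(\arg\rho,\, |\rho|^2)$ and using the identity $\pi_1/\pi_2 = \sqrt{\tfrac{N(\pi_1)}{N(\pi_2)}\cdot\tfrac{\pi_2'/\pi_2}{\pi_1'/\pi_1}}$ to convert control of the two coordinates into control of the quotient; the $\delta$-bookkeeping and the annular-window count via Proposition 5.1 all check out. The paper instead exploits conjugation: it applies Proposition 5.1 once to (a subinterval of) the target interval to produce a single totally positive prime $\pi$ with $a < \pi'/\pi < b$, then proves the short lemma that $\pi'$ is again prime, so that $\pi'/\pi$ is already the desired quotient of primes. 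The paper's argument is shorter and needs no approximation step, but it only exhibits quotients of the special form $\pi'/\pi$. Your argument is closer in spirit to Theorems 2.2 and 2.4 and is more flexible: since the numerator and denominator primes are chosen independently, it would adapt, for example, to constraining $\pi_1$ and $\pi_2$ to lie in different congruence classes, which the conjugation trick cannot do. The only things worth stating explicitly are that $\pi_1$ with $\pi_1'/\pi_1 \in (a,b]$ exists with arbitrarily large norm because $\Pi(x;a,b) \to \infty$, and that intervals meeting $\mathbb{R}_{\leq 0}$ reduce to the positive case, which you handle via the unit $-1$.
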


\begin{proof} We need to show that any given interval $(a, b)$ contains a quotient of primes in $\mathcal{O}$. Assume without loss of generality that $a, b > 0$. Then it follows directly from Proposition 5.1 that
$$\lim_{x\to\infty} (\Pi(bx) - \Pi(ax)) = \infty.$$
This implies that for all sufficiently large $x$, there exists a prime $\pi_1 \in \mathcal{O}$ such that $ax < \pi_1 < bx$. Then, since there exist primes in $\mathcal{O}$ with arbitrarily large magnitude by Proposition 5.1, let $x = \pi_2$ for some prime $\pi_2 \in \mathcal{O}$. This immediately yields $a < \frac{\pi_1}{\pi_2} < b$, as required.
\end{proof}

\vspace{.1 in}

\noindent \textbf{Remark:} An alternate proof suggested in private conversation with Garcia uses previous quotient set density theorems and bypasses the use of Proposition 5.1. As a trade-off, it does not allow for the possibility of using non-rational prime numbers to establish the density result and ultimately does not generalize to an arbitrary algebraic number ring. It relies on the following fact \cite{Marcus}:
\begin{center}
\emph{Given a real quadratic number field $\mathbb{Q}(\sqrt{d})$ with $d > 0$ and square-free, an odd rational prime $p$ remains prime in $\mathcal{O}$ iff $d$ is not a square modulo $p$.}
\end{center}

By fixing $d$, this condition gives a set of congruences that $p$ satisfies in terms of $d$. Pick one such congruence condition; by Dirichlet's Theorem on Arithmetic Progressions, there are infinitely many such primes. Applying Proposition 4.1 to this congruence condition yields Theorem 5.2.




\section{Quotients of Primes in an Algebraic Number Ring}
Now, suppose that $K$ is an algebraic number field of degree $n$ over $\mathbb{Q}$ with ring of integers $\mathcal{O}$. Before proceeding further, we need to fix some more notation. Let $n = s + 2t$ and $r = s + t - 1$, where $s$ denotes the number of real embeddings and $s$ denotes the number of pairs of complex embeddings of $K$. For an element $\omega \in \mathcal{O}$, let $\omega^{(j)}$ denote the image of $\omega$ under the $j$th embedding of $K$ into Minkowski space $L_{s,t} = \mathbb{R}^s \times \mathbb{C}^t$ with $j = 1, 2, ..., s+t$. Without loss of generality, assume that the trivial real and complex embeddings give the first and $(s+1)$-th entries of the ordered tuples in $L_{s,t}$.

\vspace{.1 in}

Now, we are able to state a version of Mitsui's generalized Prime Number Theorem \cite{Mitsui} suitable for our purpose.

\begin{thm} Fix $0 \leq \theta_1 < \theta_2 \leq 2\pi$, and let $\Pi(Y; \theta_1, \theta_2)$ denote the number of prime elements $\omega \in \mathcal{O}$ satisfying $|\omega^{(j)}| \leq Y$ for each $j = 1, ...,s+t$ and $\theta_1 <  \arg(\omega^{(k)}) < \theta_2$ for each $k = s+1, ...,s+t$. Then, for all $Y \geq 3$, we have
$$\Pi(Y; \theta_1, \theta_2) \sim \Big(\frac{\theta_2 - \theta_1}{2\pi}\Big)^t \frac{g}{2^s hR} \idotsint\limits_{[2, Y]^s \times [2, Y^2]^t} \frac{dx_1 ... dx_{s+t}}{\ln(x_1 ... x_{s+t})},$$

\noindent where $g$ is the number of roots of unity in $K$, and $R$ is the regulator of $K$.
\end{thm}

\noindent \textbf{Remark:} In the case of a real quadratic number ring ($s = 2$ and $t = 0$), this result reduces to Proposition 5.1, and in the case of an imaginary quadratic number ring ($s = 0$ and $t = 1$), this result reduces to Proposition 3.1.

\vspace{.1 in}

By \cite{Hinz}, the multiple integral is asymptotically equal to $\frac{Y^n}{\ln(Y^n)}$ . Then, by letting $Y \to \infty$, this theorem shows that there are infinitely many prime elements in $\mathcal{O}$ satisfying the desired angular constraints.

Now, we use this result to readily establish the density of quotients of primes in any algebraic number ring $\mathcal{O}$. With this theorem, and using the techniques of proof from the imaginary and real quadratic number rings (Theorems 3.2 and 5.2), we are now able to prove the main result of this paper.

\begin{thm}
Let $K$ be an algebraic number field over $\mathbb{Q}$ with ring of integers $\mathcal{O}$.
If $K \subseteq \mathbb{R}$, then any quotient set of prime elements in $\mathcal{O}$ is dense in $\mathbb{R}$. Otherwise, if $\mathbb{R} \subsetneq K \subseteq \mathbb{C}$, then any quotient set of prime elements in $\mathcal{O}$ is dense in $\mathbb{C}$.
\end{thm}

\noindent \emph{Proof:} First, assume that $K \subseteq \mathbb{R}$. We need to show that there exist a quotient of prime elements in $\mathcal{O}$ in any interval $(a, b)$. Without loss of generality, assume that $a > 0$. Then it follows directly from the generalized Prime Number Theorem that
$$\lim_{x\to\infty} (\Pi(bx; 0, 2\pi) - \Pi(ax; 0, 2\pi)) = \infty.$$
\noindent By considering the first copy of $\mathbb{R}$ in $L_{s,t}$, the last assertion implies that for all sufficiently large $x$, there exists a prime $\pi_1 \in \mathcal{O}$ such that $ax < \pi_1 < bx$. Next, since there exist primes in $\mathcal{O}$ with arbitrarily large magnitude by the generalized Prime Number Theorem, let $x = \pi_2$ for some prime $\pi_2 \in \mathcal{O}$. This immediately yields $a < \frac{\pi_1}{\pi_2} < b$, as required. \\

Next, assume that $\mathbb{R} \subsetneq K \subseteq \mathbb{C}$. It suffices to show that any annular sector $\{z \in \mathbb{C} : \psi_1 < \arg{z} < \psi_2, \, 0 < r < |z| < R\}$ contains a quotient of prime numbers in $\mathcal{O}$. It follows from the generalized Prime Number Theorem that
$$\lim_{x \to \infty} \Big[\Pi\Big(\frac{x}{r}; \psi_1, \psi_2 \Big) - \Pi\Big(\frac{x}{R}; \psi_1, \psi_2 \Big)\Big] = \infty.$$

\noindent Then, there exists $x_0 > 0$ such that
$$\Pi\Big(\frac{x}{r}; \psi_1, \psi_2 \Big) - \Pi\Big(\frac{x}{R}; \psi_1, \psi_2 \Big) \geq 2 \; \text{for all} \, x \geq x_0.$$

\noindent Moreover, since the generalized Prime Number Theorem implies that there are infinitely many prime numbers $\omega \in \mathcal{O}$ satisfying $\theta_1 <  \arg(\omega^{(k)}) < \theta_2$ for each $k = s+1, ...,s+t$, there exists a prime $\pi_1 \in \mathcal{O}$ satisfying $\arg(\pi_1^{(k)})  \in (0, \xi)$ for each $k = s+1, ...,s+t$ with sufficiently large magnitude ($|\pi_1| > x_0$) such that $$\Pi\Big(\frac{|\pi_1|}{r}; 0, \xi \Big) - \Pi\Big(\frac{|\pi_1|}{R}; 0, \xi \Big) \geq 2,$$
where $\xi = \min\{\psi_2 - \arg(\pi_1), \, \arg(\pi_1) - \psi_1)\}$.

Next, by considering the first copy of $\mathbb{C}$ in $L_{s,t}$, the inequality in the last assertion implies that there exists a prime $\pi_2 \in \mathcal{O}$ satisfying $\displaystyle\frac{|\pi_1|}{R} < |\pi_2| < \frac{|\pi_1|}{r}$ and $0 < \arg(\pi_2) < \xi$.

From this, it now follows that $\displaystyle r < \Big|\frac{\pi_1}{\pi_2}\Big| < R$ and $\displaystyle \psi_1 < \arg\Big(\frac{\pi_1}{\pi_2}\Big) < \psi_2$. $\blacksquare$

\section{Concluding Remarks}

A unifying thread through the proofs of the density theorems presented here is having a suitable version of a prime number theorem around to permit us to establish the desired results. The existence of such theorems is rather remarkable in itself and are key results belonging to both analytic and algebraic number theory.

The curious reader may wonder whether there exist congruence class analogues for quotients of primes in quadratic number rings other than $\mathbb{Z}[i]$. This is indeed the case at least when these are UFDs, and the reader is encouraged to find and state the appropriate versions of the prime number theorem to establish these results; they can be found in \cite{Hecke}, \cite{Kubilius2}, and \cite{Rademacher}.

As for another open question, one can try to formulate and establish analogous density results for the noncommutative ring of Hurwitz quaternions
$$H = \Big\{a + bi + cj + dk \; : \; a,b,c,d \in \mathbb{Z} \; \text{or} \; a,b,c,d \in \mathbb{Z} + \frac{1}{2}\Big\},$$ and try to prove that $\{\pi_1 \pi_2^{-1} \; : \; \pi_1, \pi_2 \in H \; \text{are prime}\}$ is dense in $\mathbb{H}$, the set of quaternions over $\mathbb{R}$.


\begin{thebibliography}{77}

\bibitem{Brown} B. Brown, M. Dairyko, S. Garcia, B. Lutz, and M. Someck, Four Quotient Set Gems, \textit{Amer. Math. Monthly} \textbf{121} (2014) 590-598.

\bibitem{Hecke} E. Hecke, Eine neue Art von Zetafunktionen und ihre Beziehungen zur Verteilung der Primzahlen II, \textit{Math. Zeitschr.} \textbf{6} (1920), 11-51.

\bibitem{Hinz} J. G. Hinz, On the Theorem of Barban and Davenport-Halberstam in Algebraic Number Fields., \textit{J. Number Theory} \textbf{13 (4)} (1981), 463-484.

\bibitem{Garcia1} S. Garcia, V. Selhorst-Jones, D. Ponce, and N. Simon, Quotient Sets and Diophantine Equations, \textit{Amer. Math. Monthly} \textbf{118} (2011) 704-711.

\bibitem{Garcia} S. Garcia, Quotients of Gaussian Primes, \textit{Amer. Math. Monthly} \textbf{120} (2013) 851-853.

\bibitem{Hobby}  D. Hobby and D. M. Silberger, Quotients of primes, \textit{Amer. Math. Monthly}
\textbf{100} (1993), 50-52.

\bibitem{Kubilius} J. Kubilius, The distribution of Gaussian primes in sectors and contours, \emph{Leningrad. Gos. Univ. U\v{c}. Zap, Cer. Mat. Nauk} \textbf{137 (19)} (1950) 40-52.

\bibitem{Kubilius2} J. Kubilius, On some problems of the geometry of prime numbers, \emph{Mat. Sbornik N.S.} \textbf{31 (73)} (1952) 507-542.



\bibitem{Marcus} D.A. Marcus, Number Fields, second ed., Springer-Verlag, 1995.

\bibitem{Micholson} A. Micholson, Quotients of primes in arithmetic progressions, \emph{Notes on Number Theory and Discrete Mathematics} \textbf{18 (2)} (2012) 56-57.

\bibitem{Mitsui} T. Mitsui, Generaized Prime Number Theorem, \emph{Jap. J. Math.} \textbf{26} (1956), 1-42.

\bibitem{Rademacher} H. Rademacher,  \"{U}ber die Anzahl der Primzahlen eines reell-quadratischen Zahlk\"{o}rpers, deren Konjugierten unterhalb gegebener Grenzen liegen,  \textit{Acta Arith.} \textbf{1} (1935), 67-77.

\bibitem{Sierpinski} W. Sierpi\'{n}ski, \emph{Elementary Theory of Numbers, 2nd Edition}. North-Holland, Amsterdam, 1988.

\bibitem{Sittinger} B. Sittinger, A Note on the Density of Quotients of Primes in Arithmetic Progressions, \emph{Notes on Number Theory and Discrete Mathematics} \textbf{23 (1)} (2017) 99-100.

\bibitem{Starni} P. Starni, Answers to two questions concerning quotients of primes, \textit{Amer. Math. Monthly} \textbf{102} (1995) 347-349.

\bibitem{Stewart} I. Stewart and D. Tall, Algebraic Number Theory and Fermat's Last Theorem, 3rd ed., AK Peters/CRC Press, 2001.

\end{thebibliography}
\end{document}